\documentclass[10pt]{article}
   \usepackage{graphicx}
\usepackage{latexsym}
\usepackage{float}
\usepackage{amsmath}
\usepackage{cite}
\usepackage{amssymb}
\usepackage{amsthm}
\usepackage{url}
\usepackage{tabulary}
\usepackage{booktabs}
 \usepackage{helvet}

\theoremstyle{plain}
\newtheorem{theorem}{Theorem}[section]

\theoremstyle{definition}

\def \Z {{\mathbb Z}}

\title{Primes dividing values of a given polynomial }

\begin{document}
	 
	\maketitle
	\vspace*{-0.9 cm}

\begin{abstract}Let  $P(x) \in \Z[x]$ be a polynomial. We give an easy and  new proof of the fact that the set of primes $p$ such that $p \mid P(n)$, for some $n \in \Z$, is infinite.  
We also get analog  of this result for some special domains.
\end{abstract}

\maketitle

\section{Main results} 
 Let  $P(x) \in \Z[x]$ be a polynomial.  Consider the set of      primes $p$ such that $p \mid f(n)$, for some $n \in \Z$. It  is known that this set is  infinite. This was proved for the first time by Schur and is called as Schur's theorem (see Schur   \cite{MR0462891}
 ).  In this article, we give a  new proof of this fact.      For a given prime $p$ and given number $d,$ we denote the highest power of $p$ dividing $d$ by $w_p(d)$. For instance, $w_2(12)=2^2.$ Now, we state our main result.
  
  \begin{theorem}\label{math}
  
  Let  $P(x) \in \Z[x]$ be a polynomial.  Then the set of      primes $p,$ such that there exists $n \in \Z$ such that $p \mid P(n)$   is infinite.
   
  \end{theorem}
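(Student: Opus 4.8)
The plan is to argue by contradiction, playing the unbounded growth of a polynomial against the rigidity of the admissible prime powers along a well-chosen arithmetic progression. First I would clear away the degenerate cases: if $P$ is the zero polynomial then every prime divides $P(0)=0$, and if $P$ is a nonzero constant the assertion actually fails, so I read the statement as concerning $P$ with $\deg P \ge 1$. In that regime we have $|P(n)| \to \infty$, and it is exactly this growth that the finiteness hypothesis will contradict.

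So suppose, toward a contradiction, that the only primes dividing values of $P$ are $p_1,\dots,p_k$. I would pick an integer $n_0$ with $P(n_0)\neq 0$, which is possible since $P$ has only finitely many roots. By hypothesis every prime factor of $P(n_0)$ lies in $\{p_1,\dots,p_k\}$, so writing $w_{p_i}(P(n_0))=p_i^{e_i}$ we obtain the exact factorization $|P(n_0)|=\prod_{i=1}^{k}p_i^{e_i}$. The crucial preparatory move is now to fix an exponent $N>\max_i e_i$ and set $m=\prod_{i=1}^{k} p_i^{N}$; the point of choosing $N$ above all the $e_i$ will become clear in the last step.

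Next I would run along the progression $n_0+mt$ for $t=1,2,\dots$. Using the elementary congruence $P(a+b)\equiv P(a)\pmod b$, valid for all integers $a,b$, we get $P(n_0+mt)\equiv P(n_0)\pmod m$, and since $p_i^{N}\mid m$ this gives $P(n_0+mt)\equiv P(n_0)\pmod{p_i^{N}}$ for every $i$. Because $e_i<N$, this congruence pins down the $p_i$-part exactly: $w_{p_i}(P(n_0+mt))=p_i^{e_i}$ for each $i$. As every prime factor of $P(n_0+mt)$ again lies among the $p_i$ (and $P(n_0+mt)\neq 0$ for all large $t$), this forces $|P(n_0+mt)|=\prod_{i=1}^{k}p_i^{e_i}=|P(n_0)|$ for all large $t$, flatly contradicting $|P(n_0+mt)|\to\infty$. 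That contradiction finishes the proof.

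I expect the main obstacle to be the valuation-rigidity step, namely verifying that if two integers are congruent modulo $p^{N}$ and one of them has $p$-part $p^{e}$ with $e<N$, then the other has the very same $p$-part. This is a one-line non-Archimedean computation (the valuation of a sum equals the smaller of two unequal valuations), but it is genuinely the crux: it is what upgrades a mere congruence into an \emph{equality} of prime powers, and hence into equality of the absolute values $|P(n_0+mt)|$. The hypothesis $N>\max_i e_i$ is exactly the hinge that makes this conversion legitimate, so the whole argument turns on choosing that cutoff correctly.
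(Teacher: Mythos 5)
Your proof is correct and follows essentially the same strategy as the paper: assume finitely many primes, use the congruence $P(a+b)\equiv P(a)\pmod{b}$ to freeze the exact power of each $p_i$ dividing $P$ along an arithmetic progression modulo a high power of $\prod_i p_i$, and let the growth of $|P|$ force a prime factor outside the list. The only differences are cosmetic --- you read off all the exponents $e_i$ from a single value $P(n_0)$, so no Chinese Remainder step is needed, and you rightly observe that the statement must exclude nonzero constant polynomials, a caveat the paper omits.
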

  
  \begin{proof} We prove by contradiction. Assume there are only finitely many 
 primes. We can label them as $p_1,p_2, \ldots, p_r$. There exist integers $k_i$ and $e_i$ such that   $w_{p_i}(f(k_i))=p_i^{e_i}$  $ \forall\ 1 \leq i \leq r$.  
   If $n_0$ is a solution of congruence equations $x\equiv {k_i}  \pmod {p_i^{e_i+1}}$, 
 then $f(n_0)$ is divisible by  $p_i^{e_i}$ but is not divisible by $p_i^{e_i+1}$ for any $0 \leq  i  \leq r $. This holds since   $m \equiv n \pmod  {\prod p^{
ei+1}_i}
$ implies $P(m) \equiv  P(n) \pmod  {\prod p^{
ei+1}_i},$ so that if $w_{p_i}(P(m)) = p_i^{e_i},$    
then $w_{p_i}(P(n)) $ is also $ p_i^{e_i}$.  Since $n_0$ is a solution  of the above congruence,   $n = n_0 + k
\prod
p_i^{e_i+1} $ is also a solution for every integer $ k .$ 
  The equality $P(n)= \prod_{i=1}^r p_i^{e_i}$ or $P(n)= - \prod_{i=1}^r p_i^{e_i}$  can hold only for finitely many solutions $n$  of $x\equiv {k_i}  \pmod {p_i^{e_i+1}}$ 
 Hence,  there exist a solution  $n_k$ of $x\equiv {k_i}  \pmod {p_i^{e_i+1}}$  such that $\prod_{i=0}^r P_i^{e_i}$   is a proper divisor of $P(n_k)$. Since $p_i^{e_i+1}$ cannot divide $P(n_k)$, so there exists a prime other than $p_1,p_2, \ldots p_r$ dividing $P(n_k)$, which is a contradiction. Hence the result follows.\end{proof}

  By our approach, it is evident that the result holds for polynomials with coefficients in a Dedekind domain or sometimes a domain.

\noindent \textbf{Acknowledgement:}
We thank  the anonymous referee for the comments which improved the presentation of the paper.

\bigskip

\bigskip

\noindent \textsc{Devendra Prasad \\ Department of Mathematics\\ IISER-Tirupati, Tirupati \\ Andhra Pradesh  517507, India.}\\
E-mail: devendraprasad@iisertirupati.ac.in

\end{document}